\documentclass[reqno, 12pt]{amsart}
\usepackage{amssymb,amsfonts}
\usepackage{hyperref} 
\usepackage{enumerate}
\usepackage{mathrsfs}
\usepackage[a4paper, left=2.5cm, right=2.5cm, top=2.5cm, bottom=2.5cm]{geometry}
\usepackage{setspace}

\theoremstyle{plain} 
\newtheorem{theorem}{Theorem}[section]
\newtheorem{lemma}[theorem]{Lemma}
\newtheorem{proposition}[theorem]{Proposition}

\theoremstyle{definition}

\newtheorem{remark}[theorem]{Remark}

\numberwithin{theorem}{section}

\newcommand{\R}{\mathbb{R}}
\newcommand{\C}{\mathbb{C}}
\newcommand{\Z}{\mathbb{Z}}

\renewcommand{\mod}{\operatorname{mod}}

\newcommand{\bb}[1]{\left(#1\right)}

\allowdisplaybreaks

\bibliographystyle{plain}

%--------Meta Data: Fill in your info------
\title{  On  sums of  finite subsets of the primes }

\author{Genheng Zhao}
\address{Academy of Mathematics and Systems Science, Chinese Academy of Sciences, Beijing 100190, China}
\email{zhaogenheng@amss.ac.cn}

\date{}

\begin{document}
\maketitle
\begin{abstract}
  Let   $A\subset [1,x]$ be a non-empty set of primes   with  $|A|= \alpha x(\log x)^{-1}$.   We prove that  there  exist absolute constants  $c_1,c_2>0$ such that, as $x$ gets sufficiently large,  we have  $|A+A|\geq c_1(\log x)(\log \log 3\alpha^{-1})^{-1}|A|$ if  $\alpha \geq c_2(\log x)^{-1/2}\log \log x$   and otherwise $|A+A|\geq c_1(\log x) (\log 2\alpha^{-1})^{-1}|A|$.  	\end{abstract}

\section{Introduction and main results}

Let $A$  be  a finite set of  integers.  It has been shown in many aspects that the size of  $A+A=\{a_1+a_2:a_1,a_2\in A\}$  conveys much structural information of $A$. For instance, Freiman's famous theorem \cite{Freiman} states  that if $|A+A|$ is  comparable to $|A|$, then  $A$ must be contained in some generalized arithmetical  progression of  proper dimension and size. Conversely, without  such structure, $|A+A|$ is supposed to be considerably  larger than  $|A|$. An interesting example is the set of primes. 

The binary Goldbach conjecture asserts that every   sufficiently large even integer  is a sum of two odd primes. Though this full conjecture is still out of reach by current methods, it is  proved in \cite{Lu} that for sufficiently large  $x$,  there are at most  $O(x^{0.879})$ exceptions among even integers in $[2,2x]$. Let $A$ be the set of primes in $[1,x]$. This means  $$|A+A|\gtrsim  (\log x)|A|.$$

It is believed that similar results will hold for subsets of the primes, provided the relative density is not too small. To quantify this, we suppose $A\subset[1,x] $ is a  set of  primes with $|A|= \alpha x(\log x)^{-1} $. Note first that by prime number theorem we have $\alpha \leq1+o_x(1)$.  

In  \cite{Ramare-Ruzsa}, Ramar\'e and Ruzsa considered relative dense subsets of the primes.   They showed that  for any fixed $\alpha>0$, as  $x\gtrsim_{\alpha} 1$, we have
\begin{equation}\label{eq: maki}
 	|A+A|\gtrsim \frac{\log x}{\log \log 3\alpha^{-1}}|A|,
 \end{equation}
  where the implicit constant was later made  explicit and sharp in the work of Matom\"aki \cite{Matomaki}.
  When $\alpha$ is not fixed, in particular when $\alpha=o_x(1)$ which corresponds to the case $A$ is  relative sparse,  Cui-Li-Xue \cite{Cui-Li-Xue} proved that for any $c_0>0$, if  $\alpha \geq (\log x)^{-c_0}$ then as $x\gtrsim 1$ we have 
\begin{equation}\label{eq: their-result}
|A+A|\gtrsim_{c_0} \frac{\log x}{\log \log x}|A|.
\end{equation}
Due to the presence of Siegel-Walfisz theorem in their method, the implicit constant here is ineffective.  

Observe that there is an obvious  gap between these two results, that is, \eqref{eq: their-result} does not recover \eqref{eq: maki}   if we assume $A$ is relative dense. To remedy this, we couple ideas from \cite{Ramare-Ruzsa} and \cite{Cui-Li-Xue} to derive the following theorem. 

\begin{theorem}\label{thm: main}  Let $A\subset [1,x]$ be a  set of primes   with  $|A|= \alpha x (\log x)^{-1}$.  As $x\gtrsim 1$, there exists an absolute constant $c>0$ such that for  $\alpha \geq c(\log x)^{-1/2}\log \log x$, we have
$$|A+A|\gtrsim \frac{\log x}{\log  \log 3\alpha^{-1}}|A|.$$
\end{theorem}

Since our method does not invoke Siegel-Walfisz theorem, the above result is effective. In fact, this  bound is also sharp in the sense that if we take $A=\{p\leq x:p\equiv a(\mod q)\}$ with $q=\prod_{p<z}p$, $z\leq \log \log x$  and $(a,q)=1$, then $\alpha \asymp \phi(q)^{-1}$, $z\asymp \log q$  and thus
$$|A+A|\lesssim  \frac{\phi(q)}{q}(\log x) |A|\asymp  \frac{\log x}{ \log z}|A|\asymp \frac{\log x}{\log  \log q} |A|\asymp \frac{\log x}{\log \log 3\alpha^{-1}}|A|. $$

For sparser $A$, we have instead the following bound, which is effective, slightly weaker than \eqref{eq: maki} but  stronger than \eqref{eq: their-result}. \begin{theorem}\label{thm:second}
Let $A\subset [1,x]$ be a non-empty set of primes   with  $|A|=\alpha x (\log x)^{-1}$.  As $x\gtrsim 1$,  we have	$$|A+A|\gtrsim \frac{\log x}{\log 2\alpha^{-1}}|A|.$$
\end{theorem}

\begin{remark} The above theorem is trivial if $\alpha \leq x^{-\delta }$ for some constant $\delta >0$. For these extremely sparse sets,   we can deduce from  \cite{Sanders} that there is some $c\in (0,1/4)$ such that     $$ |A+A|\geq  e^{\bb{\log \log |A|}^c}|A|,$$
provided  $|A|\geq e^{(\log \log x)^2}$ and $x\gtrsim 1$. This bound, however, is too weak to recover \eqref{eq: their-result}.  	 
\end{remark}

 To prove Theorem \ref{thm: main}, we will follow \cite{Cui-Li-Xue} to replace  the primes by   Granville's model  for primes to transfer our problem to a local one,  and then follow \cite{Ramare-Ruzsa} to solve the local problem. For  a detailed discussion of this probabilistic model for primes, see \cite{Granville}.  The main advantage  of our method takes place in the first step, where we use Brun's sieve coupled with several  analytic results   instead of Selberg's sieve  in \cite{Ramare-Ruzsa}. We also rely on the restriction  theorem for primes, in particular the sharp one proved by Ramar\'e \cite{Ramare2}, based on which we can derive a  restriction theorem for sparse subsets of the primes. And then Theorem \ref{thm:second} is a simple consequence. 
  
The rest of this paper is organized as follows. In Section 2 we fix some notations. In Section 3 we introduce Granville's model for primes. In Section 4 we introduce the sharp  restriction theorem and prove Theorem \ref{thm:second}. In section 5 we prove  Theorem \ref{thm: main}.
\section{Notation}
$\C:$ the set of complex numbers.

$\R:$ the set of real numbers.

$\Z:$ the set of complex numbers.

$\mathbb{P}: $ the set of prime numbers.

$\Z_q: $ the quotient group $\Z/q\Z$.

$\Z_q^*:$ invertible elements in $\Z_q$. 

$(q_1,q_2):$ the greatest common divisor of $q_1$ and $q_2$.

The asymptotic notations $\lesssim$, $\gtrsim $, $\asymp$ and $O()$ are used in a standard way. Besides, we will alway use $p$ to denote a prime and $1_S$ to denote the truth value of a statement $S$, that is,  $1_S=1$ if $S$ is true and otherwise $1_S=0$.

\section{Granville's model}
Let $x$ be a sufficiently large real number. Let $w\in [e^2, \frac{1}{2}\log  x]$ and  $W=\prod_{p<w} p $. These parameters will be in force throughout the rest of this paper. For  $n\leq x$,  Granville's model for $1_{n\in \mathbb{P}}\log n$ is defined by  $1_{(n,W)=1}\frac{W}{\phi(W)}$, where $\phi$ is Euler's totient function. We next define their Fourier transforms  by 
\begin{equation}\label{eq: s}
	S(\theta)=\sum_{p\leq x} (\log p)e(p\theta ),
\end{equation}
and \begin{equation}\label {eq: scr}
	G(\theta)=\frac{W}{\phi(W)}\sum_{\substack{n\leq x}}1_{(n,W)=1}e(n\theta),
\end{equation}
where we used the abbreviation $e\bb{\theta }=e^{2\pi i\theta }$.
	Our main task of this section is to verify  the idea
 that $G(\theta)$ is a good  approximation to $S(\theta)$, provided $w$ is large enough. 
 
 \begin{proposition}\label{prop: granville}As $x\gtrsim 1$, we have $$\sup_{\theta\in \R/\Z}|S(\theta)-G(\theta)|\lesssim \frac{\log \log w}{\sqrt{w}}x.$$
\end{proposition}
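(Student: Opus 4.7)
The plan is to use the Hardy--Littlewood circle method. I would first apply Dirichlet's approximation theorem to write each $\theta \in \R/\Z$ as $\theta = a/q + \beta$ with $(a,q)=1$, $1 \leq q \leq Q$ and $|\beta| \leq 1/(qQ)$, where $Q = x(\log x)^{-A}$ for a sufficiently large $A=A(\kappa)$. The circle would then be partitioned into major arcs (where $q \leq (\log x)^{A_0}$ for an intermediate $A_0 < A$) and minor arcs (where $q > (\log x)^{A_0}$), with $|S-G|$ estimated separately on each piece.

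On the major arcs I would invoke the Siegel--Walfisz theorem (source of the ineffective constant) combined with partial summation in $\beta$ to derive
$$S(a/q+\beta) = \frac{\mu(q)}{\phi(q)}\, T(\beta) + O_B\bigl(x(\log x)^{-B}\bigr)$$
for any fixed $B>0$, where $T(\beta) = \sum_{n \leq x} e(n\beta)$. A parallel computation for $G$ would proceed by grouping the sum over $n$ by residue classes modulo $q$; using the squarefreeness of $W$ together with the Chinese remainder theorem, I expect to obtain: when $q \mid W$, $G(a/q+\beta) = \frac{\mu(q)}{\phi(q)} T(\beta) + O(W)$, matching $S$ up to negligible error; when $q \nmid W$, writing $q = q_1 q_2$ with $q_1 = \gcd(q, W)$ and $q_2 \geq w$, the sum over the $q_2$-part of the decomposition is complete and therefore vanishes, giving $G(a/q+\beta) = O(x \log\log w / w)$. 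In the latter case $|S|$ admits the same bound via $|\mu(q)|/\phi(q) \lesssim \log\log w/w$ (using $\phi(q) \gtrsim q/\log\log q$ and $q \geq w$).

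On the minor arcs I would control $S$ by the classical Vinogradov--Vaughan estimate, $|S(\theta)| \lesssim (xq^{-1/2} + x^{4/5} + (xq)^{1/2})(\log x)^{O(1)}$, which is $\leq x/\sqrt w$ for $q \in ((\log x)^{A_0}, Q]$ provided $A_0$ and $A$ are chosen sufficiently large in terms of $\kappa$. For $G$ I would use the representation
$$G(\theta) = \frac{W}{\phi(W)} \sum_{\substack{b < W \\ (b,W)=1}} e(b\theta) \sum_{0 \leq k \leq (x-b)/W} e(kW\theta),$$
bound the inner geometric sum by $\min(x/W, 1/\|W\theta\|)$, and verify $\|W\theta\| \geq \gcd(q,W)/(2q)$ on the Dirichlet arc above $a/q$ (noting that taking $A_0 > 10\kappa$ forces $q \nmid W$ on minor arcs, so $\gcd(q,W) < q$). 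This yields $|G(\theta)| \leq 2Wq \leq 2WQ \leq x/\sqrt w$.

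The hardest step will be the major arc case $q \nmid W$: the Chinese remainder decomposition into the ``$W$-part'' $q_1$ and ``$W$-free part'' $q_2$ of $q$ must be carried out carefully to exhibit the Ramanujan-sum type cancellation that forces $G$ to be small. The factor $\log\log w$ appearing in the statement arises through repeated use of the estimate $W/\phi(W) \asymp \log\log w$.
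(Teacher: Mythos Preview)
Your overall plan---circle method, Siegel--Walfisz on the major arcs, Vinogradov--Vaughan on the minor arcs---is the same as the paper's. The fatal gap is in your handling of $G$: you repeatedly treat $W=\prod_{p<w}p$ as if it were of size polynomial in $\log x$, but by Chebyshev $\log W\asymp w$, so $W\approx e^{w}$. Since the proposition allows $w$ as large as $(\log x)^{10\kappa}$, one has $W$ up to $\exp\bigl((\log x)^{10\kappa}\bigr)$, which already exceeds $x$ once $\kappa\geq 1/10$ and in any case dominates every fixed power of $\log x$. Hence your major-arc error ``$O(W)$'' is useless, your minor-arc inequality $2WQ\leq x/\sqrt{w}$ (which with $Q=x(\log x)^{-A}$ reads $(\log x)^{A}\geq 2W\sqrt{w}$) is false, and even the preliminary lower bound $\lVert W\theta\rVert\geq \gcd(q,W)/(2q)$ breaks down because it needs $W/Q\leq\gcd(q,W)/2$. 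The very representation of $G$ as a sum over residues $b\bmod W$ with an inner geometric progression of step $W$ becomes vacuous when $W>x$. (A secondary issue: the factorisation $q=q_{1}q_{2}$ with $q_{1}=\gcd(q,W)$ need not have coprime factors---take $q=4$, $w>2$---so your CRT step as stated also fails.)

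The paper never sums modulo $W$. Instead it estimates $\sum_{n\leq y,\ n\equiv l\,(q)}1_{(n,W)=1}$ for $q\leq Q=e^{\epsilon\sqrt{\log x}}$ via a truncated inclusion--exclusion (Brun's sieve) at level $2r\approx\sqrt{\log y}$, obtaining error $O_{\kappa}(ye^{-\sqrt{\log y}})$ independent of the size of $W$. Partial summation then gives $G(a/q+\beta)$ a main term governed by $\sum_{l\bmod q,\ (l,(q,W))=1}e(al/q)=1_{q\mid W}\,\mu(q)$, which for $q<w$ cancels the Siegel--Walfisz main term of $S$ exactly; for $q\in[w,Q]$ the paper keeps the possible exceptional-zero term explicitly and bounds everything by $(\sqrt{q}/\phi(q))\,x\lesssim_{\kappa}(\log\log w/\sqrt{w})\,x$, the $\sqrt{q}$ coming from the Gauss sum of the exceptional character. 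The minor-arc bound for $G$ is obtained by the same sieve device (quoted from \cite{Cui-Li-Xue}). This Brun-sieve input is the missing idea in your sketch.
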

The proof of the above proposition is based on  circle method with respect to the parameter $Q=e^{\frac{1}{3} (\log x)^{1/3}}$. We now 
identify $\R/\Z$ with $(Qx^{-1},1+Qx^{-1}]$ and  define the major arcs by 
$$ \mathfrak{M}= \bigcup_{1\leq q\leq Q}\bigcup_{\substack{a=1\\ (a,q)=1}}^q \mathfrak{M}(a,q)=\{\theta\in  \R:|\theta-a/q|\leq Qx^{-1}q^{-1}\}.$$ Then  minor arcs are simply defined by $\mathfrak{m}=(Qx^{-1},1+Qx^{-1}]\setminus \mathfrak{M} $.  For convenience, we set $$r(n)=1_{n\in \mathbb{P}}\log n-1_{(n,W)=1}\frac{W}{\phi(W)}$$ and $R(\theta)=S(\theta)-G(\theta)$.

The estimate on minor arcs is quite standard. Let $\theta \in \mathfrak {m}$. 
By Dirichlet approximation theorem there exists a co-prime pair $a,q$ with $1\leq q\leq xQ^{-1}$ such that $|\theta-a/q|\leq Qx^{-1}q^{-1}$. Notice that when  $q\leq Q$ and $x>2Q^2$, we  have  $$\frac{1}{q}-\frac{Q}{xq}\leq \frac{1}{Q}-\frac{x}{Q}<\frac{x}{Q},$$ 
which means  $1\leq a\leq q$ and  then contradicts  to the fact $\theta\in \mathfrak{m}$. As $x\gtrsim 1$, we see that  $x>2Q^2$ holds and  thus $q\in (Q,xQ^{-1}]$.   Combing  \cite[Lemma 2.2]{Cui-Li-Xue}, \cite[Lemma 2.3]{Cui-Li-Xue} and the fact $$\frac{W}{\phi(W)}=\prod_{p<w}\bb{1-\frac{1}{p}}\lesssim \log w,$$ 
this leads to
$$ |R(\theta)|\lesssim  xQ^{-1/2}(\log x)^{O(1)}\lesssim xe^{-\frac{1}{12}(\log x)^{1/3}}.$$

\begin{lemma}\label{lem: minor}
 As $x\gtrsim 1$, we have $$\sup_ {\theta \in \mathfrak{m} }|R(\theta)|\lesssim \frac{\log \log w}{\sqrt{w}}x.$$
\end{lemma}
To proceed on major arcs, we need to estimate
$$\sum_{\substack{n\leq y\\ n\equiv l(\mod q)}}r(n)=\sum_{\substack{p\leq y\\ p\equiv l(\mod q)}}\log p-\frac{W}{\phi(W)}\sum_{\substack{n\leq y\\ n\equiv l(\mod q)}}1_{(n,W)=1}$$
for $1\leq l\leq q\leq Q$ and $y\in [\sqrt{x},x]$.    For the prime part, we  use two different types of the prime number theorem for arithmetic progressions to show the following.  
\begin{lemma} \label{lem: prime-average}Let  $1\leq l\leq q\leq Q$ and $y\in [\sqrt{x},x]$. As $x\gtrsim 1$, we have
$$\sum_{\substack{p\leq y\\ p\equiv l(\mod q)}}\log p=\frac{1_{(l,q)=1}}{\phi(q)} \bb{y-1_{q\in [w,Q]}\chi_1(l) \frac{y^{\beta_1}}{\beta_1}}+O(ye^{-(\log x)^{1/3} }),$$
where $\beta_1\in [1/2,1]$ is the  Landau-Siegel   zero with respect to  the Dirichlet character $ \chi_1 (\mod q)$. \end{lemma}
\begin{proof} 
	When $q\in [w,Q]$, using  \cite[Theorem 8.29]{Tenenenbaum} we have for  $y\ge 2$  and some absolute $c>0$ that 
	$$\sum_{\substack{p\leq y\\ p\equiv l(\mod q)}}\log p=\frac{1_{(l,q)=1}}{\phi(q)} \bb{y-\chi_1(l) \frac{y^{\beta_1}}{\beta_1}}+O(ye^{-c(\log y)^{1/2} }).$$
	As $x\gtrsim 1$, the error term is $O(ye^{-(\log x)^{1/3}})$ for $y\in [\sqrt{x},x]$. When $q<w$ and thus $q\leq \log x$, we instead use  \cite[Theorem 8.31]{Tenenenbaum}  to obtain for $y\geq 3$ and some absolute $c'>0$ that  
	$$ \sum_{\substack{p\leq y\\ p\equiv l(\mod q)}}\log p=\frac{1_{(l,q)=1}}{\phi(q)} y+O(ye^{-c'(\log y)^{1/2}(\log \log y)^{-2} }).$$
	As $x\gtrsim 1$, this error term is also $O(ye^{-(\log x)^{1/3}})$ for $y\in [\sqrt{x},x]$.
\end{proof}
As for the part of Granville's model, we appeal to the simplest form of Brun's sieve that for $r\geq 1$,
\begin{equation}
\label{eq: Brun}
\sum_{\substack{d\mid (n,W)\\ \omega(d)\leq 2r-1}}\mu(d)\leq1_{(n,W)=1}\leq \sum_{\substack{d\mid (n,W)\\ \omega(d)\leq 2r}} 	\mu(d),
\end{equation}
where $\omega(d)$ denotes the number of distinct prime divisors of $d$.

\begin{lemma}\label{lem: granville-average}Let  $1\leq l\leq q\leq Q$ and $y\in [\sqrt{x},x]$. Write $q=q_1q_2$ with $q_1=(q,W)$. As $x\gtrsim 1$, we have
$$ \frac{W}{\phi(W)}\sum_{\substack{p\leq y\\ p\equiv l(\mod q)}}1_{(n,W)=1}= \frac{1_{(l,q_1)=1}}{\phi(q_1)q_2}  y+O(ye^{-(\log x)^{1/3}}).$$
\end{lemma}
\begin{proof}We first   give an estimate on 
$$ \sum_{\substack{n\leq y\\ n\equiv l(\mod q)}} 1_{(n,W)=1}.$$
Notice   that by \eqref{eq: Brun}, we have
$$\sum_{\substack{d\mid W\\ \omega(d)\leq 2r-1}} 	\mu(d)\sum_{\substack{md\leq x\\ md\equiv l(\mod q)}}1\leq \sum_{\substack{n\leq y\\ n\equiv l(\mod q)}} 1_{(n,W)=1}\leq  \sum_{\substack{d\mid W\\ \omega(d)\leq 2r}} 	\mu(d)\sum_{\substack{md\leq x\\ md\equiv l(\mod q)}}1.
$$
It is clear that
$$\sum_{\substack{md\leq y\\ md\equiv l(\mod q)}}1= \rho (d)\frac{y}{q}+O(1),
 $$
where $\rho(d)=(q,d)/d$ if $(q,d)\mid l$ and  otherwise $\rho(d)=0$.  We thus have 
$$ \sum_{\substack{n\leq y\\ n\equiv l(\mod q)}} 1_{(n,W)=1}=  \frac{y}{q}\sum_{d\mid W}\mu(d)\rho(d) +O\bb{\sum_{\substack{d\mid  W\\ \omega(d)\geq 2r}}\rho(d)\frac{y}{q}}+O(w^{2r}).$$
Let $2r$  be the  nearest  even number to $\sqrt{\log y}$. It then follows that 
$$\sum_{d\mid W}\mu(d)\rho(d)= \prod_{p<w}(1-\rho(p))=1_{(l,q,W)=1}\prod_{\substack{p<w \\ p\nmid q}}\bb{1-\frac{1}{p}},$$
\begin{equation*}
	\begin{split}
		\sum_{\substack{d\mid  W\\ \omega(d)\geq 2r}}\rho(d)\frac{y}{q}\leq y\sum_{\substack{d\mid  W\\ \omega(d)\geq 2r}}\frac{1}{d}&\leq  y\sum_{k\geq 2r}\frac{1}{k!}\bb{\sum_{p<w}\frac{1}{p}}^{k}\\ &\lesssim y \sum_{k\geq 2r}\bb{C\frac{\log \log w}{k}}^{k}
		\\ &\lesssim y \sum_{k\geq 2r} e^{-k}
		\\&\lesssim ye^{-\sqrt{\log y}}.
	\end{split}
\end{equation*}

and 
$$w^{2r}=y^{2r\frac{\log w}{\log y }}\leq y^{O\bb{\frac{\log  w}{\sqrt{\log y}}}}\leq \sqrt{y},$$
as $x\gtrsim1 $.
As a result, we obtain
$$  \sum_{\substack{n\leq y\\ n\equiv l(\mod q)}}1_{(n,W)=1}= 1_{(l,q,W)=1} \frac{y}{q}\prod_{\substack{p<w \\ p\nmid q}}\bb{1-\frac{1}{p}}+O(ye^{-\sqrt{\log y}}).$$
Multiplying both sides by $W/\phi(W)$ and using $y\in [\sqrt{x},x]$ we arrive at
$$ \frac{W}{\phi(W)}\sum_{\substack{n\leq y\\ n\equiv l(\mod q)}}1_{(n,W)=1}=1_{(l,q,W)=1}\frac{y}{q} \prod_{\substack{p \mid (q,W)}}\bb{1-\frac{1}{p}}^{-1}+O(ye^{-(\log x)^{1/3}}),$$
which is essentially the desired result.
\end{proof}

Now we can give an estimate for $R(\theta)$ on major arcs.

\begin{lemma}\label{lem: major}As $x\gtrsim 1$, we have
$$ \sup_{\theta \in \mathfrak{M}}|R(\theta)|\lesssim  \frac{\log \log w}{\sqrt{w}}$$  
	
\end{lemma}

\begin{proof}
We now fix a major arc $\mathfrak{M}(a,q)$ with $1\leq a\leq q\leq Q$ and $(a,q)=1$. Let $z=\theta-a/q$. Observe that 
$$R(\theta)=\sum_{\substack{l=1}}^q e\bb{la/q}\sum_{\substack{n\leq x\\ n\equiv l(\mod q)}} r(n) e(zn).$$

When  $q<w$ and thus $q\mid W$, as $x\gtrsim 1$ we can apply  Lemma \ref{lem: prime-average} and Lemma  \ref{lem: granville-average} to deduce for $1\leq l\leq q$ and $y \in [\sqrt{x},x]$ that
$$ \sum_{\substack{n\leq y\\ n\equiv l(\mod q)}} r(n)=O(ye^{-(\log x)^{1/3}}),$$
 Integrating by parts this gives
$$ \sum_{\substack{n\leq x\\ n\equiv l(\mod q)}} r(n)e(zn)\lesssim (1+x|z|)xe^{-(\log x)^{1/3}}+\sqrt{x}\log x.$$
 Hence by $|z|\leq Qx^{-1}$ and $w\leq \frac{1}{2}\log x$,
$$ |R(\theta)|\lesssim Q^2xe^{-(\log x)^{1/3}}=xe^{-\frac{1}{3}(\log x)^{1/3}}\lesssim\frac{\log \log w}{\sqrt{w}} x $$

Otherwise, when $q\in [w,Q]$, we have instead
\begin{equation}
	\begin{split}
		 \sum_{\substack{n\leq y\\n \equiv l(\mod q)}} r(n)=&\frac{1_{(l,q)=1}}{\phi(q)} \bb{y-\chi_1(l) \frac{y^{\beta_1}}{\beta_1}}-\frac{1_{(l,q_1)=1}}{\phi(q_1)q_2}y\\&+O(ye^{-(\log x)^{1/3}}),
	\end{split}
\end{equation}
where $\chi_1,\beta_1$ are given  by Lemma \ref{lem: prime-average} and $q_1,q_2$ are given by Lemma \ref{lem: granville-average}.
As before, integrating by parts, this soon gives $R(\theta)=R_1(\theta)-R_2(\theta)-R_3(\theta)+O(xe^{-\frac{1}{3}(\log x)^{1/3}})$, with 
$$R_1(\theta)=\frac{1}{\phi(q)}\bb{\sum_{\substack{l=1\\ (l,q)=1}}^qe(la/q)}\int_0^x e(zu)du,$$
$$ R_{2}(\theta)=\frac{1}{\phi(q)}\bb{\sum_{l=1}^q \chi_1(l) e(la/q)}\int_0^x u^{\beta_1-1}e(zu)du,$$
and $$ R_{3}(\theta)=\frac{1}{\phi(q_1)q_2}\bb{\sum_{\substack{l=1\\(l,q_1)=1}}^q  e(la/q)}\int_0^x e(zu)du.$$  
It is well-known that  the Ramanujan sum 
$$\sum_{\substack{l=1\\(l,q)=1}}^qe(la/q)=\mu(q)$$
and the Gauss sum
$$ \left|\sum_{l=1}^q \chi_1(l) e(la/q)\right |\leq \sqrt{q}.$$
Writing $l=l_2q_2+l_1$,  we have
$$\sum_{\substack{l=1\\(l,q_1)=1}}^q  e(la/q)  = \sum_{l_2=0}^{q_2-1}\sum_{\substack{l_1=1\\(l_1,q_1)=1}}^{q-1}e(l_2a/q_1+l_1a/q)= \sum_{l_2=0}^{q_2-1}e(l_2a/q)\sum_{\substack{l_1=1\\(l_1,q_1)=1}}^{q-1}e(l_1a/q_1)=1_{q_2=1}\mu(q_1).$$
We thus  obtain  $$|R(\theta)|\lesssim  \bb{\frac{\sqrt{q}}{\phi(q)}+e^{-\frac{1}{3}(\log x)^{1/3}}}x\lesssim \frac{\log \log w}{\sqrt{w}}x.$$
	
	Since $\mathfrak{M}(a,q)$ is chosen arbitrarily,   we conclude the proof.

\end{proof}

\begin{proof}[Proof of Proposition 3.1] 	 Apply Lemma \ref{lem: major} and Lemma \ref{lem: minor}. The theorem follows readily since $(Qx^{-1},1+Qx^{-1}]=\mathfrak{M}\cup \mathfrak{m}$.
\end{proof}

\begin{remark}Assuming that  there is no Landau-Siegel zero,  the above argument  can give a stronger bound 
$$|R(\theta)|\lesssim \frac{\log \log w}{w}x,$$since there is no Gauss sum.
This bound will further  imply Theorem \ref{thm: main} for a wider  range $\alpha\geq c (\log x)^{-1}\log \log x$, which is the limit of this method.
	
\end{remark}

\section{Sharp Restriction theorem }

Let $A\subset [1,x]$ be a non-empty set of primes with $|A|=\alpha x(\log x)^{-1}$.  From Bourgain's result \cite[Equation (4.39)]{Bourgain}, one can deduce that  for any $q>2$,
$$\int_{0}^1 |\sum_{p\in A} e(p\theta)|^qd\theta \lesssim_q |A|^{q-1} \frac{\alpha^{1-q/2} }{\log x}.$$
Observe that, compared to Plancherel  identity where the bound $|A|^{q-1}$ is available, we gain an extra factor $\alpha^{\frac{2-q}{2}}(\log x)^{-1}$.  However, this factor makes sense only when $\alpha\geq  (\log x)^{-\frac{2}{q-2}}$. To treat sparser subsets of the primes,  we invoke   Ramar\'e's  work  \cite{Ramare2}.

\begin{proposition}\label{prop: restriction}Let $A\subset [1,x]$ be a non-empty set of primes with $|A|=\alpha x(\log x)^{-1}$.  As $x\gtrsim 1$, we have for $q>2$ that
$$\int_{0}^1 |\sum_{p\in A} e(p\theta)|^qd\theta \lesssim_q |A|^{q-1} \frac{\log 2\alpha^{-1} }{\log x}.$$
\end{proposition}
\begin{proof} Without loss of generality we can restrict $q\in (2,3]$. As $x\gtrsim 1$,   we can deduce from  \cite[Corollary 1.3]{Ramare2} that
$$ \int_{0}^1 |\sum_{p\in A} e(p\theta)|^qd\theta \lesssim  |A|^{q-1} \frac{\alpha^{1-q/2} }{(q-2)\log x}.$$
Let $q'-2=(q-2)(10\log 2\alpha^{-1})^{-1}<q-2$ and apply the above to  $q'$. We have
$$\int_{0}^1 |\sum_{p\in A} e(p\theta)|^qd\theta\leq |A|^{q-q'} \int_{0}^1 |\sum_{p\in A} e(p\theta)|^{q'}d\theta \lesssim_{q}|A|^{q-1} \frac{\log 2\alpha^{-1}}{\log x}. $$
\end{proof}

 Inserting the  weight $\log p$ into \cite[Corollary 1.3]{Ramare2}, we also obtain a weighted version of the above.

\begin{proposition}\label{prop: weighted-restriction }Let $A\subset [1,x]$ be a non-empty set of primes with $\sum_{p\in A} \log p=\alpha  x$. Then as $x\gtrsim 1$, we have for $q>2$ that
$$\int_{0}^1 |\sum_{p\in A} (\log p)e(p\theta)|^qd\theta \lesssim_q (\alpha x)^{q-1} \log 2\alpha^{-1} .$$
\end{proposition}

Since the argument of \cite{Ramare2} is sieve-theoretic,  Proposition \ref{prop: restriction} and Proposition \ref{prop: weighted-restriction } are both effective. To see the sharpness we can simply take $A=\{2\}$.  Finally, to conclude this section, we give the short proof of Theorem \ref{thm:second}.

\begin{proof}[Proof of Theorem 1.2]Let $A\subset[1,x]$ be a non-empty set of integers with $|A|=\alpha x(\log x)^{-1}$. By Cauchy-Schwarz inequality  we have
\begin{equation*}
\begin{split}
	|A|^4=\bb{\sum_{n\in A+A}\sum_{\substack{n=p_1+p_2\\p_1,p_2\in A}}1}^2&\leq |A+A| 	\sum_{\substack{p_1+p_2=p_3+p_4\\p_1,p_2,p_3,p_4\in A}}1\\&=|A+A|\int_0^1|\sum_{p\in A}e(p\theta)|^4d\theta
\end{split}
\end{equation*}
Meanwhile, as $x\gtrsim 1$, by Proposition \ref{prop: restriction}  we have
$$\int_{0}^1|\sum_{p\in A}e(p\theta)|^4d\theta\lesssim |A|^3\frac{\log 2\alpha^{-1}}{\log x}.$$
\end{proof}

\section{Proof of   Theorem 1} 
Before proving Theorem \ref{thm: main}, we show a local result which provides the crucial saving in our argument. 
\begin{lemma}\label{lem: local} Suppose $L_1,L_2\subset \Z_W$ with $|L_1|,|L_2|\leq \beta W$ and $\beta\in (0,1]$. Then
$$\sum_{\substack{i+j\in \Z_W^*\\i \in L_1,j\in L_2}}1\lesssim |L_1||L_2|\cdot \frac{\log \log 3\beta^{-1}}{\log w}$$ 
\end{lemma}

\begin{proof} The case $\log 3\beta^{-1}\geq  \ w$ is trivial so we assume $\log 3\beta^{-1}<   w$. Using \cite[Theorem 3]{Ramare-Ruzsa}, we have $$\sum_{\substack{i+j\in \Z_W^*\\i\in L_1,j\in L_2}}1 \lesssim |L_1||L_2|  \exp\bb{-\sum_{\log 3\beta^{-1}\leq p< w}\frac{1}{p}+3(\log 3\beta^{-1})\sum_{\log 3\beta^{-1}\leq p< w}\frac{1}{p^2}}.$$
Since
$$\exp\bb{-\sum_{\log 3\beta^{-1}\leq p< w}\frac{1}{p}}\lesssim \frac{\log \log 3\beta^{-1}}{\log w} $$
and 
$$ \exp\bb{(3\log 3\beta^{-1})\sum_{\log 3\beta^{-1}\leq p< w}\frac{1}{p^2}}\lesssim 1,$$
we conclude the proof.
\end{proof}

\begin{proof}[Proof of Theorem 1.1]   Let $A\subset [1,x]$ be a non-empty set of primes and  define $$S_A(\theta)=\sum_{\substack{p\in A }}(\log p)e(p\theta).$$
Suppose first that $|A|=\alpha x (\log x)^{-1}$ with $\alpha\geq (\log x)^{-1}$. As $x\gtrsim 1$, we see$$S_A(0)\geq \sum_{p\leq \frac{\alpha}{2} x}\log p\gtrsim \alpha x.$$
Note that by Cauchy-Schwarz inequality
\begin{equation}\label{eq: energy}
\begin{split}
S_A(0)^4=\bb{\sum_{n\in A+A}\sum_{\substack{n=p_1+p_2\\p_1,p_2\in A}}\log p}^2&\leq |A+A| 	\sum_{\substack{p_1+p_2=p_3+p_4\\p_1,p_2,p_3,p_4\in A}}(\log p_1)(\log p_2)(\log p_3)(\log p_4)
\\&=|A+A|\int_0^1|S_A(\theta)|^4d\theta.
\end{split}
\end{equation}
We now use Granville's model to estimate the energy integral
$$E=\int_0^1|S_A(\theta)|^4d\theta.$$ Let $S(\theta)$ be as in  \eqref{eq: s} and $G(\theta)$ be as in \eqref{eq: scr}. Notice that 

$$\sum_{\substack{p_1+p_2=p_3+p_4\\p_1,p_2,p_3,p_4\in A}}(\log p_1)(\log p_2)(\log p_3)(\log p_4)\leq \sum_{\substack{p+p_1=p_2+p_3\\p\leq x\\p_1,p_2,p_3\in A\\}}(\log p)(\log p_2)(\log p_3)(\log p_4).$$
We have
$$E\leq \int_0^1S(\theta)S_A(-\theta)|S_A(\theta)|^2d\theta.$$
By Proposition \ref{prop: granville}, this gives
$$ E\lesssim \frac{\log \log w}{\sqrt{w}}x\int_0^1|S_A(\theta)|^3d\theta +\int_0^1G(\theta)S_A(-\theta)|S_A(\theta)|^2d\theta.$$
Denote the above inequality by $E\lesssim  E_1+E_2$. As $x\gtrsim 1$, we  deduce from Proposition  \ref{prop: weighted-restriction } that 
$$E_1\lesssim \frac{\log \log w}{\sqrt{w}}x^3 \alpha ^2\log 2\alpha^{-1}. $$
To estimate $E_2$, we notice that
$$E_2\leq \frac{W}{\phi(W)}(\log x)^3\sum_{p_1,p_2,p_3\in A} 1_{(W,p_2+p_3-p_1)=1}.$$
For each $j\in \Z_W$,  denote $A_j=\{p\in A:p\equiv j(\mod W)\}$. Then
$$\sum_{p_1,p_2,p_3\in A} 1_{(W,p_2+p_3-p_1)=1}=\sum_{p\in A}\sum_{\substack{i+j\in \Z_W^*\\ i,j \in \Z_W}}|A_i||A_{j+p}|\leq \frac{\alpha x}{\log x} \max_{p\in A}\sum_{\substack{i+j\in \Z_W^*\\ i,j \in \Z_W}}|A_i||A_{j+p}|.$$
Meanwhile, we let $|A_j|=\alpha_j x(\phi(W)\log x)^{-1}$. Thus
$$E_2\leq \alpha x^3\frac{W}{\phi(W)^3} \max_{p\in A}\sum_{\substack{i+j\in \Z_W^*\\ i,j \in \Z_W}}\alpha_i\alpha_{j+p}$$
Recall that $W\leq \prod_{p< \frac{1}{2}\log x}p\lesssim x^{3/4}$. Then by Brun-Titchmarsh  theorem (see \cite[Theorem 4.16]{Tenenenbaum}) we have $\alpha_j\in [0,10]$, as $x\gtrsim 1$. Now we have  the constraint
\begin{equation}\label{eq: constraint}
	\sum_{j\in \Z_W}\alpha_j=\alpha \phi(W),\quad \alpha_{j}\in [0,10] \text{ for }j\in  \Z_W.
\end{equation}
Suppose that $\{ \alpha_j'\}_{j\in \Z_W}$ and $\{\alpha''_j\}_{j\in \Z_W}$ maximize the bilinear form
\begin{equation}\label{eq: sum}
	 \Phi(\alpha',\alpha'')=\sum_{\substack{i+j\in \Z_W^*\\ i,j \in \Z_W}}\alpha_i'\alpha''_j
\end{equation}
 with both of them  subject to \eqref{eq: constraint}. We claim that we can assume there is at most one $j\in \Z_W$ such that $\ \alpha_j' \not \in \{0,10\}$ and the same for $\alpha''$. Indeed, if there exist $ \alpha_{j_1}', \alpha_{j_2}'\in (0,10)$ with $j_1\neq j_2$, then for any $ S_2\geq S_1\geq 0$, we have
 $$\alpha_{j_1}'S_1+\alpha_{j_2}'S_2\leq \begin{cases}
 	(\alpha_{j_1}'+\alpha_{j_2}'-10)S_1+10S_2,&  \alpha_{j_1}'+ \alpha_{j_2}'\geq 10,\\0\cdot S_1+ (\alpha_{j_1}'+\alpha_{j_2}')S_2, & \alpha_{j_1}'+ \alpha_{j_2}'< 10.
 \end{cases}$$ 
 This means we can replace $\alpha_{j_1}',\alpha_{j_2}'$ by $\alpha_{j_1}'+\alpha_{j_2}'-10,10$ or $0,\alpha_{j_1}'+\alpha_{j_2}'$to let  $\Phi(\alpha',\alpha'')$ increase  with $\alpha'$ still subject to $\eqref{eq: constraint}$.

  Now we have
  \begin{equation*}
  \begin{split}
  	\Phi(\alpha',\alpha'')&\leq 20\alpha \phi(W)+ 100\max_{\substack{L_1,L_2\subset \Z_W\\|L_1|,|L_2|\leq \frac{\alpha}{10}\phi(W) }} \sum_{\substack{i+j\in \Z_{W}^*\\i\in L_1,j\in L_2}}1\\&\lesssim \alpha \phi(W)+\alpha^2\phi(W)^2\frac{\log \log (3\alpha^{-1}\log w)}{\log w},
  \end{split}
  \end{equation*}
  where in the last step we used Lemma \ref{lem: local}. It then follows
  \begin{equation*}
  	\begin{split}
  	E_2&\lesssim \alpha^2x^3\frac{W}{\phi(W)^2}+ \alpha^3x^3 \frac{\log \log (3\alpha^{-1}\log w)}{\log w }\frac{W}{\phi(W)}\\&\lesssim \alpha ^2x^3e^{-\frac{1}{2}w}+\alpha^3x^3\log \log (3\alpha^{-1}\log w)  	\end{split}
  \end{equation*}
  and thus
  $$E\lesssim E_1+E_2\lesssim \alpha^3x^3\bb{\alpha^{-1}e^{-\frac{1}{2}w}+\log \log (3\alpha^{-1}\log w)+\frac{\log \log w}{\sqrt{w}}\alpha^{-1}\log 2\alpha^{-1}}.$$  
  Set $w=10e^2(\alpha^{-1}\log 2\alpha^{-1})^2\geq e^2$. Let $c_1>0$ be such that   $\alpha^{-1}\leq c_1 (\log x)^{1/2}(\log \log x)^{-1} $ implies $w\leq \frac{1}{2}\log x$. Then   $$ E\lesssim \alpha^3x^3 (1+\log \log 3\alpha^{-1}+\log \log w)\lesssim \alpha^3x^3 \log \log 3\alpha^{-1}.$$
 Combing this with \eqref{eq: energy} we conclude the proof with $c=c_1^{-1}$.
\end{proof}

\end{document}